\def\thesection{\arabic{section}}
\def\theequation{\thesection.\arabic{equation}}
\newcommand{\noi} {\noindent}
\markboth{\small } {\small Singular Problem in Metric Measure Spaces}
\def\theequation{\@arabic{\c@section}.\@arabic{\c@equation}}
\newtheorem{Theorem}{Theorem}[section]
\newtheorem{Lemma}[Theorem]{Lemma}
\newtheorem{Definition}[Theorem]{Definition}
\begin{document}

{\vspace{0.01in}}
\title{Nonexistence of variational minimizers related to a quasilinear singular problem in metric measure spaces}

\author{Prashanta Garain and Juha Kinnunen}

\maketitle

\begin{abstract}
In this article we consider a variational problem related to a quasilinear singular problem and obtain a nonexistence result in a metric measure space with a doubling measure and a Poincar\'e inequality. Our method is purely variational and to the best of our knowledge, this is the first work concerning singular problems in a general metric setting.
\medskip

\noi \textit{Key words:} Analysis on metric measure spaces, Newtonian space, variational integral, minimizer, energy estimate, nonexistence.

\medskip

\noi \textit{2020 Mathematics Subject Classification:} 35A01, 35A15, 35B45, 35J92.

\end{abstract}

\section{Introduction}
In this article, we discuss the question of nonexistence of variational minimizers related to a quasilinear singular problem
\begin{equation}\label{Eqn Motive to define stable}
-\Delta_p u=
-\operatorname{div}(|\nabla u|^{p-2}\nabla u)=f(u).
\end{equation} 
Singular problem refers to a blow up near zero of the nonlinearity on the right-hand side of \eqref{Eqn Motive to define stable}. 
Singular elliptic problems has been a topic of considerable attention over the last three decades and there is a colossal amount of literature available concerning the question of existence, uniqueness, nonexistence and regularity. Most of these results are in the Euclidean setting, but recently related result have been obtained on Riemannian manifolds as well.

Stable solution for the equation \eqref{Eqn Motive to define stable} in the Euclidean case is defined to be a function $u$ in a appropriate Sobolev space satisfying 
\begin{equation*}
\int_{\mathbb{R}^N}|\nabla u|^{p-2}\nabla u\cdot\nabla\phi\,dx=\int_{\mathbb{R}^N}f(u)\phi\,dx
\end{equation*}
and
\begin{equation*}
\int_{\mathbb{R}^N}f'(u)\phi^2\,dx\leq\int_{\mathbb{R}^N}\big(|\nabla u|^{p-2}|\nabla\phi|^2+(p-2)|\nabla u|^{p-4}|\langle\nabla u,\nabla\phi\rangle|^2\big)\,dx
\end{equation*}
simultaneously for every $\phi\in C_c^{\infty}(\mathbb{R}^N)$. 
On a metric measure space, we consider a notion of variational minimizer (Definition \ref{Variationalmin}) analogous to stable solution where the role of modulus of the gradient in the Euclidean space is played by the upper gradient. The main goal in this article is to provide a nonexistence result concerning stable solutions to the problem
\begin{equation}\label{plap}
\Delta_p u=u^{-\delta}
\end{equation}
in $X$, with $p\ge2$, where $X$ is a metric measure space with a doubling measure and a Poincar\'e inequality (see Section 2 for the definitions). 
We establish an energy estimate (Lemma \ref{energy}) that is an important ingredient to prove our main result (Theorem \ref{mainthm}).
Our main result is valid for stable solutions in Newtonian spaces, a counterpart of Sobolev spaces in the Euclidean case, 
up to a dimension of the metric space related to the underlying measure (see Section 2).
To the best of our knowledge, till date there is no literature available concerning nonexistence of stable solutions to singular problem in a general metric measure space setting. 

To motivate our present study, let us give an overview of the available results related to our question. In the Euclidean space $\mathbb{R}^N$, Crandall et al. proved in \cite{CRT} that, for any bounded smooth domain $\Omega$, the Dirichlet problem related to the singular problem $-\Delta u=u^{-\delta}$ admits a unique positive classical solution for any $\delta>0$.
Lazer and McKenna proved in \cite{LcMc} that such solutions are weak solutions for $0<\delta<3$. 
This restriction on $\delta$ was removed by Boccardo and Orsina in \cite{BocOrs} and has further been extended by De Cave \cite{Decave} and Canino et al. \cite{Canino} to the $p$-Laplace equation $-\Delta_p u=u^{-\delta}$ for any $\delta>0$. 
The existence and nonexistence of ground state solutions to the problem of type $-\Delta_p u=f(x)u^{-\delta}$ in $\mathbb R^N$ are also investigated under suitable assumptions on $f$, see C$\hat{\text{i}}$rstea and R$\breve{\text{a}}$dulescu \cite{Radhupaper}, Ghergu and R$\breve{\text{a}}$dulescu \cite{Radhubook} and the references therein. Moreover, stable solutions for equations of the type $-\Delta_p u=f(u)$ have been investigated by several authors for various types of nonlinearity $f$. In the semilinear case $p=2$, Dupaigne and Farina in \cite{DupFar} proved that for a nonnegative $f\in C^1(\mathbb{R})$, any bounded stable solution $u\in C^2(\mathbb{R}^N)$ of the problem $-\Delta u=f(u)$ is constant for $1\leq N\leq 4$. 

Very recently a long standing conjecture related to the optimal regularity of stable solutions for the equation $-\Delta_p u=f(u)$ was solved by Cabr$\acute{\text{e}}$ et al. in \cite{CabMirSan} under the assumptions that $f\in C^1(\Omega)$, $\Omega$ is a strictly convex bounded smooth domain in $\mathbb{R}^N$, $N<p+\frac{4p}{p-1}$ and $p>2$.
The semilinear case $p=2$ up to dimension $9$ has been studied by Cabr$\acute{\text{e}}$ et al. in \cite{Cabre}. 
We also refer the reader to \cite{Cabre3, CabCap, Cabre-RosOton, DupFar2, Dupaignebook, Farina1, Nedev} for nonsingular $f$. Castorina et al. in \cite{Castorina} studied nonexistence results to the $p$-Laplace equations involving the polynomial and power type nonlinearites, e,g. $f(u)=e^u,(1+u)^{-\frac{1}{\delta-1}}$ when $\delta>1$ and $(1+u)^\frac{1}{1-\delta}$ when $0<\delta<1$. Almost simultaneously for the singular case, $f(u)=-u^{-\delta}$, Ma and Wei in \cite{MaWei} proved that the problem $\Delta u=u^{-\delta}$ does not admit any positive $C^2(\mathbb{R}^N)$ stable solution for any $\delta>0$, provided 
$$2\leq N<2+\frac{4}{1+\delta}\Big(\delta+\sqrt{\delta^2+\delta}\Big).$$ 
Similar results are obtained for the $p$-Laplace equation $-\Delta_p u=f(u)$ in $\mathbb{R}^N$ with singular nonlinearity $f$ in lower dimension, see \cite{Chen,Ple2} and the references therein.

In the Riemannian setting, Farina et al. \cite{FarinaMariEnrico} studied qualitative properties of stable solutions to the semilinear problem $-\Delta u=f(u)$ for nonsingular $f$ to determine the structure of the manifold. For the Lane-Emden-Fowler nonlinearity $f(s)=|s|^{p-1}s$, Berchio et al. in \cite{Ber} studied existence, uniqueness and stability of radial solutions in a Riemannian manifold. Recently in \cite{Marcos}, do $\acute{\text{O}}$ and Clemente obtained some regularity properties of semistable solutions for the Gelfand nonlinearity $f(u)=e^u$, power nonlinearity $(1+u)^{m}$ for $m>1$ and they also considered the singular case $(1-s)^{-2}$ in a bounded smooth domain of a complete Riemannian manifold. 

This article is organized as follows. In Section 2, we present some basic definitions, preliminaries for our set up and state our main result. In Section 3, we focus on proving the energy estimate and in Section 4, our main result is proved.

\section{Preliminaries and main result}
This section is devoted to some basic definitions and known results in metric measure spaces and the statement of the main result. For a more detailed discussion on the theory of metric measure spaces, see Bj$\ddot{\text{o}}$rn \cite{AJBjorn}, Heinonen \cite{Heinonen1}, Heinonen et al. \cite{Heinonen2} and the references therein.

Throughout the article, we assume that the triplet $(X,d,\mu)$ is a metric measure space, where $d$ is the metric and $\mu$ is a Borel measure on $X$. 
We define for $x\in X$, the ball with center at $x$ and of radius $r>0$ by $B(x,r)=\{y\in X:d(y,x)<r\}$. 
We write $C$ to denote a positive constant which may vary from line to line or even in the same line depending on the situation. Moreover, if $C$ depends on $r_1,r_2,\dots,r_n$, we write $C=C(r_1,r_2,\dots,r_n)$.

Next we define the notion of doubling measure and dimension of the metric space $X$.
\begin{Definition}(Doubling measure)
We say that $\mu$ is a doubling measure, if for every $r>0$ and $x\in X$, there exists a constant $C_\mu\geq 1$, called the doubling constant of $\mu$, such that 
\begin{equation}\label{doubling}
0<\mu(B(x,2r))\leq C_\mu\mu(B(x,r))<\infty.
\end{equation}  
\end{Definition}

The following lemma gives a counterpart of the dimension for a doubling measure in a metric measure space. 
\begin{Lemma}\label{doublinglemma}(Lemma 3.3, \cite{AJBjorn})
Assume $\mu$ is doubling with the doubling constant $C_\mu\geq 1$. 
Then
\begin{equation}\label{doublingimply}
\frac{\mu(B(x,R))}{\mu(B(x,r))}\leq C\Big(\frac{R}{r}\Big)^m,
\end{equation}
for every $x\in X$ and every $0<r\leq R$, with $C=C_\mu^{2}$ and $m=\log_2\,C_\mu$.
\end{Lemma}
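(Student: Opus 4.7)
The plan is to prove the lemma by iterating the doubling condition \eqref{doubling}. First I would establish, by a straightforward induction on $k\in\mathbb{N}$, the estimate
\[
\mu(B(x,2^k r))\leq C_\mu^{k}\mu(B(x,r))
\]
for all $x\in X$, $r>0$ and $k\ge 0$. The base case $k=0$ is trivial, and the inductive step is exactly the doubling assumption applied to the ball $B(x,2^{k-1}r)$.

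Next, given $0<r\le R$, I would pick the smallest integer $k\ge 0$ with $R\le 2^{k}r$; if $R=r$ the claim is trivial, so I can assume $k\ge 1$ and $2^{k-1}r<R\le 2^{k}r$. Monotonicity of $\mu$ together with $B(x,R)\subseteq B(x,2^{k}r)$ and the iterated doubling bound yield
\[
\mu(B(x,R))\le\mu(B(x,2^{k}r))\le C_\mu^{k}\mu(B(x,r)).
\]
From the defining inequality $2^{k-1}r<R$ I get $k<\log_2(R/r)+1$, so $k\le\log_2(R/r)+1$, and therefore
\[
C_\mu^{k}\le C_\mu^{\log_2(R/r)+1}=C_\mu\bigl(2^{\log_2(R/r)}\bigr)^{\log_2 C_\mu}=C_\mu\Bigl(\tfrac{R}{r}\Bigr)^{m},
\]
with $m=\log_2 C_\mu$. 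Absorbing the slack in the rounding of $k$ into the multiplicative constant gives the claimed bound with $C=C_\mu^{2}$, which is a slightly loose but clean statement (one could actually take $C=C_\mu$).

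There is no real obstacle here; the only thing to be careful about is the choice of $k$ and ensuring that the inclusion $B(x,R)\subseteq B(x,2^{k}r)$ holds so that monotonicity of $\mu$ can be applied. The exponent $m=\log_2 C_\mu$ arises naturally from the identity $C_\mu^{\log_2 t}=t^{\log_2 C_\mu}$, which is precisely the mechanism that converts the geometric doubling iteration into polynomial growth in $R/r$.
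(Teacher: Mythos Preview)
Your argument is correct and is the standard iteration-of-doubling proof; the paper itself does not prove this lemma but cites it from Bj\"orn--Bj\"orn \cite{AJBjorn} (Lemma 3.3), so there is no in-paper proof to compare against. Your observation that the constant $C=C_\mu$ already suffices (and that $C=C_\mu^{2}$ is merely a looser bookkeeping choice) is also right.
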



We define the notion of upper gradient of an extended real valued function in metric measure space by path integrals.
A path is a continuous mapping from a compact  one-dimensional interval to $X$.
A path is rectifiable, if it has finite length. We refer to \cite{AJBjorn, Heinonen1, Heinonen2} for more on paths, path integrals and $p$-modulus of a path family.

\begin{Definition}\label{uppergrad}(Upper gradient)
A Borel function $g:X\to[0,\infty]$ is an upper gradient of a function $u:X\to[-\infty,\infty]$, if for every rectifiable path $\gamma$ joining $x$ and $y$, we have
\begin{equation}\label{uppergraddef}
|u(x)-u(y)|\leq \int_{\gamma}g\,ds.
\end{equation}
If $u(x)=u(y)=\infty$ or $u(x)=u(y)=-\infty$, then we define the left side of \eqref{uppergraddef} to be $\infty$. 
We say that a nonnegative measurable function $g$ is a $p$-weak upper gradient of $u$, 
if \eqref{uppergraddef} holds outside a path family of zero $p$-modulus.   
\end{Definition}
If $u$ is Lipschitz, then the Lipschitz constant is an upper gradient of $u$, see \cite{AJBjorn}. 
Now we define the notion of Sobolev space on a metric measure space, see \cite{AJBjorn, Heinonen2, Nagametric}.
\begin{Definition}\label{Newton}(Newtonian space)
For $1\leq p<\infty$, we define $\bar{N}^{1,p}(X)$ to be the space of all $p$-integrable functions $u$ such that $u$ admits a $p$-integrable upper gradient $g$ under the seminorm 
$$
||u||_{\bar{N}^{1,p}(X)}=||u||_{L^p(X)}+\inf||g||_{L^p(X)},
$$ 
the infimum being taken over all upper gradients $g$ of $u$. 
The Newtonian space $N^{1,p}(X)$ is the quotient space $\bar{N}^{1,p}(X)/\sim$ under the norm
$$
||u||_{N^{1,p}(X)}=||u||_{\bar{N}^{1,p}(X)},
$$
where $u\sim v$ if $||u-v||_{\bar{N}^{1,p}(X)}=0$.
The corresponding local space $N^{1,p}_{\mathrm{loc}}(X)$ is defined by requiring that every point $x\in X$ has an open neighborhood $U_x$ in $X$ such that $u\in N^{1,p}(U_x)$. 
\end{Definition}

A result in \cite{Nagametric} asserts that $N^{1,p}(X)$ is a Banach space, see also \cite{AJBjorn, Heinonen2}.
If $u$ has an upper gradient $g\in L^p(X)$, then it follows from \cite{Nagametric} that $u$ has a unique minimal upper gradient $g_u$ in the sense that for every $p$-upper gradient $g\in L^p(X)$ of $u$, we have $g_u\leq g$ $\mu$-almost everywhere and
$$
||g_u||_{L^p(X)}=\inf||g||_{L^p(X)},
$$
see also \cite{AJBjorn, Heinonen2}.
For more discussion on upper gradient and further properties of Newtonian spaces, we refer the reader to \cite{AJBjorn, Heinonen1, Heinonen2}.

\begin{Definition}\label{Poincareineq}(Weak $(1,p)$-Poincar\'e inequality)
We say that $X$ supports a weak $(1,p)$-Poincar\'e inequality if there exist a constant $C>0$ and $\lambda\geq 1$ such that
for every $u\in L^1_{\mathrm{loc}}(X)$,  every upper gradient $g$ of $u$ and every ball $B(x,r)$ in $X$, we have
\begin{equation}\label{Poncare}
\fint_{B(x,r)}|u-u_{B(x,r)}|\,d\mu\leq C r\Big(\fint_{B(x,\lambda r)}g^p\,d\mu\Big)^\frac{1}{p},
\end{equation}
where
$$
u_{B(x,r)}
=\fint_{B(x,r)}u\,d\mu
=\frac{1}{\mu(B(x,r))}\int_{B(x,r)}u\,d\mu.
$$  
\end{Definition}

For the rest of the article, we assume that $X$ is a complete metric space with a doubling measure \eqref{doubling} and the weak $(1,p)$-Poincar\'e inequality \eqref{Poncare}. Moreover throughout we assume that $p\geq 2$ unless otherwise mentioned. A complete metric measure space with a doubling measure is proper, that is, closed and bounded sets are compact, see \cite[Proposition 3.1]{AJBjorn}.

Next we define variational minimizers corresponding to the general singular problem \eqref{Eqn Motive to define stable}, although we are mainly interested in the special case $f(u)=-u^{-\delta}$ with $\delta>0$, when we have \eqref{plap}.

\begin{Definition}\label{Variationalmin}(Variational minimizer)
Let $p\geq 2$. We say that a function $u\in N^{1,p}_{\mathrm{loc}}(X)$ which is positive in $X$ is a variational minimizer corresponding to the problem \eqref{Eqn Motive to define stable}, if for every $\phi\in N^{1,p}(X)$ with compact support in $X$ we have $f(u)\phi\in L^1_{\mathrm{loc}}(X)$, $f'(u)\phi^2\in L^1_{\mathrm{loc}}(X)$ and the following two inequalities
\begin{equation}\label{weak}
\int_{X}g_u^{p}\,d\mu\leq\int_{X}g_{u+\phi}^{p}\,d\mu-\int_{X}f(u)\phi\,d\mu,
\end{equation}
and
\begin{equation}\label{stable}
\int_{X}f'(u)\phi^2\,d\mu\leq(p-1)\int_{X}g_u^{p-2}g_{\phi}^2\,d\mu
\end{equation}
hold simultaneously.
\end{Definition}

Before proceeding to state our main result, let us fix notation which will be used throughout.
For $p\geq 2$ and $\delta>0$, we denote
$$
I=\Big(\frac{(p^2-3p+2)^2}{4(p-1)^2},\infty\Big),
\quad
\delta_p=\frac{2\delta+p-1+2\sqrt{\delta(\delta+p-1)}}{p-1}
\quad\text{and}\quad 
J=(p-1,\delta_p).
$$
Observe that $J\neq\emptyset$, if $\delta\in I$.

Next we state our main result concerning nonexistence of variational minimizers.
\begin{Theorem}\label{mainthm}
Assume that $p\geq 2$, $(X,d,\mu)$ is a complete metric space satisfying the doubling property \eqref{doubling}, with the doubling constant $C_\mu$, and $X$ supports the weak $(1,p)$-Poincar\'e inequality \eqref{Poncare}. Let $m=\log_{2}C_\mu$ and $\delta\in I$. 
If
$$
m<\frac{p(\delta+\delta_p)}{\delta+p-1},
$$
there is no positive variational minimizer in $N^{1,p}_{\mathrm{loc}}(X)$ corresponding to the problem \eqref{plap}.
\end{Theorem}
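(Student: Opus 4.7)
The plan is to argue by contradiction: assume $u\in N^{1,p}_{\mathrm{loc}}(X)$ is a positive variational minimizer of \eqref{plap}, so $f(u)=-u^{-\delta}$ and $f'(u)=\delta u^{-\delta-1}$. The core idea, following the Euclidean strategy of Ma--Wei and Castorina et al., is to feed a carefully chosen test function into \eqref{stable}, combine the resulting inequality with the energy estimate (Lemma \ref{energy}), and then exhaust $X$ with Lipschitz cutoffs to force the right-hand side to vanish. Concretely, I would take $\phi=u^{\alpha}\eta^{\beta}$, where $\eta$ is a Lipschitz cutoff with compact support in $X$, $\alpha\in J=(p-1,\delta_p)$, and $\beta$ chosen so that $2\beta$ matches the power of $\eta$ appearing in Lemma \ref{energy} (typically $2\beta=p$). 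Since $u,\eta>0$, the Leibniz rule for minimal upper gradients yields $g_{\phi}\le \alpha u^{\alpha-1}\eta^{\beta}g_{u}+\beta u^{\alpha}\eta^{\beta-1}g_{\eta}$, whence Young's inequality with parameter $\varepsilon>0$ gives
\[
g_\phi^{2}\le(1+\varepsilon)\alpha^{2}u^{2\alpha-2}\eta^{2\beta}g_u^{2}+C_\varepsilon u^{2\alpha}\eta^{2\beta-2}g_\eta^{2}.
\]
Plugging into \eqref{stable} converts the stability inequality into a bound on $\delta\int u^{-\delta-1+2\alpha}\eta^{2\beta}\,d\mu$ by two integrals, the first being the critical high-order term $(p-1)(1+\varepsilon)\alpha^{2}\int g_u^{p}u^{2\alpha-2}\eta^{2\beta}\,d\mu$.

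The second step is to apply Lemma \ref{energy} to reroute $\int g_u^{p}u^{2\alpha-2}\eta^{2\beta}\,d\mu$ back onto $\int u^{-\delta-1+2\alpha}\eta^{2\beta}\,d\mu$ plus a remainder supported where $g_\eta\ne 0$. After a second Young inequality to dispose of the mixed term $\int g_u^{p-2}u^{2\alpha}\eta^{2\beta-2}g_\eta^{2}\,d\mu$, the main integral appears on both sides, and the absorption step succeeds provided the resulting quadratic in $\alpha$ (with coefficients involving $p,\delta$) is strictly dominated by $\delta$. This is precisely what the definition $\delta_p=(2\delta+p-1+2\sqrt{\delta(\delta+p-1)})/(p-1)$ encodes: $\delta\in I$ is equivalent to $J=(p-1,\delta_p)\ne\emptyset$, and every $\alpha\in J$ produces strict absorption. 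The surviving inequality reads
\[
\int_{X}u^{-\delta-1+2\alpha}\eta^{2\beta}\,d\mu\le C(\alpha,p,\delta)\int_{X}u^{2\alpha+p-2}\eta^{2\beta-p}g_{\eta}^{p}\,d\mu,
\]
where only the cutoff gradient $g_{\eta}$ appears on the right.

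The third step is a capacity/scaling argument. Fix $x_{0}\in X$ and take $\eta=\eta_{R}$ to be a Lipschitz cutoff that is $1$ on $B(x_{0},R)$, vanishes outside $B(x_{0},2R)$ and has $g_{\eta_{R}}\le C/R$; such a cutoff is available because $d(\cdot,x_{0})$ is $1$-Lipschitz. Applying H\"older to the right-hand side with dual exponents $(2\alpha-\delta-1+p)/(2\alpha-\delta-1)$ and $(2\alpha-\delta-1+p)/p$ splits it into a piece that can be reabsorbed into the left integral and a residual factor of the form $C R^{-p}\mu(B(x_{0},2R))^{(\delta+p-1)/(\delta+2\alpha-1)}$. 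Using Lemma \ref{doublinglemma} to estimate $\mu(B(x_{0},2R))\le C R^{m}$, one obtains
\[
\int_{B(x_{0},R)}u^{-\delta-1+2\alpha}\,d\mu\le C\, R^{\,m(\delta+p-1)/(\delta+2\alpha-1)-p}.
\]
Optimizing by letting $\alpha\uparrow\delta_{p}$ makes the exponent negative exactly when $m<p(\delta+\delta_{p})/(\delta+p-1)$, which is the hypothesis of the theorem. Letting $R\to\infty$ forces $\int_{X}u^{-\delta-1+2\alpha}\,d\mu=0$, contradicting $u>0$.

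The main obstacle is bookkeeping around the absorption step: unlike in $\mathbb{R}^{N}$, where the sharp stability inequality exploits $(p-2)|\nabla u|^{p-4}|\langle\nabla u,\nabla\phi\rangle|^{2}$, the metric analogue \eqref{stable} only offers the cruder $(p-1)g_{u}^{p-2}g_{\phi}^{2}$. Preserving the optimal range $J=(p-1,\delta_{p})$ therefore demands choosing $\varepsilon$ in the first Young inequality as a function of $\alpha$ so that the leading coefficient $(p-1)(1+\varepsilon)\alpha^{2}$, after Lemma \ref{energy} is applied, still lies strictly below $\delta$ for some $\alpha\in J$. A secondary technical point is that $u$ need not be bounded or bounded away from zero on $\mathrm{supp}(\eta)$, so the powers $u^{\alpha}$ should first be handled via truncations $\min\{u,M\}$ or shifts $u+\sigma$ and the resulting inequalities passed to the limit via monotone/dominated convergence, using $u\in N^{1,p}_{\mathrm{loc}}(X)$ together with the integrability assumptions built into Definition \ref{Variationalmin}.
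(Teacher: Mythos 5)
Your overall architecture (test the stability inequality, combine with a Caccioppoli-type bound into an energy estimate, then exhaust $X$ by Lipschitz cutoffs and use the doubling exponent $m$) is the same as the paper's, but the exponent bookkeeping contains errors that break the argument. First, the test function for \eqref{stable} must carry a \emph{negative} power of $u$: the paper uses $\phi_2=G_l(u)\psi^{p/2}$ with $G_l(t)\approx t^{(1-\beta)/2}$ and $\beta\in J=(p-1,\delta_p)$, so that $f'(u)\phi_2^2\approx\delta\,u^{-\delta-\beta}\psi^p$ and the gradient term $g_u^p\,G_l'(u)^2\approx g_u^p\,u^{-\beta-1}$ is exactly what the first-variation inequality \eqref{weak}, tested with $F_l(u)\psi^p$, $F_l(t)\approx t^{-\beta}$, controls. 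Your $\phi=u^{\alpha}\eta^{\beta}$ with $\alpha\in J$, hence $\alpha\geq p-1\geq 1>0$, produces the term $\int g_u^p\,u^{2\alpha-2}\eta^{2\beta}\,d\mu$, which corresponds to $u^{-\beta'-1}$ with $\beta'=1-2\alpha<0\notin J$; Lemma \ref{energy} does not apply to that term. Moreover, since you invoke Lemma \ref{energy} anyway, the entire stability-testing step is redundant: the lemma's left-hand side already contains both $u^{-\delta-\beta}$ and $g_u^p\,u^{-\beta-1}$, so the proof of the theorem reduces to the cutoff argument alone. Second, the mismatch propagates to your final step: the H\"older exponents $\frac{2\alpha-\delta-1+p}{2\alpha-\delta-1}$ and $\frac{2\alpha-\delta-1+p}{p}$ do not convert $u^{2\alpha+p-2}$ into the left-hand power $u^{2\alpha-\delta-1}$ (and one of the would-be conjugate exponents drops below $1$ for $\alpha>p-1$), and your exponent $\frac{m(\delta+p-1)}{\delta+2\alpha-1}-p$ is negative iff $m<\frac{p(\delta+2\alpha-1)}{\delta+p-1}$, which as $\alpha\uparrow\delta_p$ yields the condition $m<\frac{p(\delta+2\delta_p-1)}{\delta+p-1}$; since $\delta_p>1$ this is strictly weaker than, not equal to, the theorem's hypothesis, contrary to your claim that it matches ``exactly.''

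For comparison, the paper's proof of the theorem takes $\psi=\psi_R^{(\delta+\beta)/(\delta+p-1)}$ in \eqref{energyineq}, so that $g_\psi\leq\frac{\delta+\beta}{\delta+p-1}\psi_R^{(\beta-p+1)/(\delta+p-1)}g_{\psi_R}$, and then applies Young's inequality with the conjugate exponents $\frac{\delta+\beta}{\beta-p+1}$ and $\frac{\delta+\beta}{\delta+p-1}$ (both exceed $1$ precisely because $\beta>p-1$): this absorbs $u^{p-\beta-1}\psi_R^{p(\beta-p+1)/(\delta+p-1)}$ into $\tfrac12 u^{-\delta-\beta}\psi_R^{p(\delta+\beta)/(\delta+p-1)}$ and leaves only $C\int g_{\psi_R}^{p(\delta+\beta)/(\delta+p-1)}\,d\mu\leq CR^{-p(\delta+\beta)/(\delta+p-1)}\mu(B(0,2R))\leq CR^{\,m-p(\delta+\beta)/(\delta+p-1)}$ by Lemma \ref{doublinglemma}; choosing $\beta\in J$ close to $\delta_p$ makes the exponent negative under the stated hypothesis, and letting $R\to\infty$ gives the contradiction. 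With the power of $u$ in your test function corrected to the negative range (equivalently, working with the paper's $\beta\in J$ throughout) and the absorption done via this Young step rather than your H\"older split, your plan would go through; as written, the absorption and the final dimensional restriction do not come out right.
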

To prove Theorem \ref{mainthm} we establish an energy estimate (Lemma \ref{energy}) for variational minimizers in the next section.

\section{Energy estimate}
This section is devoted to establish energy estimate for variational minimizers which relies on choosing suitable test functions. In this concern let us denote by $\text{Lip}_c(X)$ to be the class of Lipschitz functions with compact support in $X$. For any $\beta>p-1$ and $l\in\mathbb{N}$, we define the truncated functions $F_l$ and $G_l$ by
$$
F_l(t)=
\begin{cases}
-\beta l^{\beta+1}\Big(t-\frac{\beta+1}{l\beta}\Big),\text{ for }0\leq t\leq\frac{1}{l},\\
t^{-\beta},\text{ for }t\geq \frac{1}{l},\\
\end{cases}
$$
and
$$
G_l(t)=
\begin{cases}
\frac{1-\beta}{2}l^\frac{\beta+1}{2}\Big(t+\frac{\beta+1}{l(1-\beta)}\Big),\text{ for }0\leq t\leq\frac{1}{l},\\
t^\frac{1-\beta}{2},\text{ for }t\geq \frac{1}{l}.\\
\end{cases}
$$
It can be easily seen that $F_l$ and $G_l$ are monotone decreasing $C^1([0,\infty))$ functions with the properties
\begin{equation}\label{cond1}
\begin{split}
 G_l(t)^{2}\geq t F_l(t),
\end{split}
\end{equation}
 \begin{equation}\label{cond2}
 \begin{split}
 G_l'(t)^{2}=\frac{(\beta-1)^2}{4\beta}|F_l'(t)|,
 \end{split}
 \end{equation}
 \begin{equation}\label{cond3}
 \begin{split}
 F_l(t)|F_l'(t)|^{1-p}+G_l(t)^{p}|G_l'(t)|^{2-p}&\leq C|t|^{p-\beta-1},\quad\text{for every}\quad t\geq 0,
 \end{split}
 \end{equation}
for some positive constant $C=C(p,\beta)$. We refer to \cite{Marmas1} for a similar technique.

\begin{Lemma}\label{energy}
Let $\delta\in I$, $p\geq 2$ and assume that $u\in N^{1,p}_{\mathrm{loc}}(X)$ is a positive variational minimizer corresponding to the problem \eqref{plap}. Then for every $\beta\in J$ and every $\psi\in\mathrm{Lip}_c(X)$ with $0\leq\psi\leq 1$, there exists a constant $C=C(\delta,p,\beta)>0$ such that
\begin{equation}\label{energyineq}
\int_{X}(u^{-\delta-\beta}+g_u^{p}u^{-\beta-1})\psi^p\,d\mu\leq C\int_{X}u^{p-\beta-1}g_{\psi}^p\,d\mu.
\end{equation}
\end{Lemma}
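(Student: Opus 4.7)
The plan is to combine the stability inequality (\ref{stable}) with the minimization inequality (\ref{weak}), both tested against carefully chosen functions built from the truncations $F_l$, $G_l$ and the cutoff $\psi$. The properties (\ref{cond1})–(\ref{cond3}) are designed precisely so the resulting Young-inequality estimates line up, and the absorption at the end succeeds on the sharp range $\beta\in J$.

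\emph{Step 1: Stability.} I would test (\ref{stable}) with $\phi=G_l(u)\psi^{p/2}$, which lies in $N^{1,p}(X)$ with compact support. The Leibniz and chain rules for minimal upper gradients give $g_\phi \leq |G_l'(u)|g_u\psi^{p/2} + \tfrac{p}{2}G_l(u)\psi^{p/2-1}g_\psi$. Squaring, applying Young with a small parameter $\epsilon$, and handling the remaining cross-term $g_u^{p-2}G_l(u)^2\psi^{p-2}g_\psi^2$ with a second Young of exponents $(p/(p-2),p/2)$ weighted by $|G_l'(u)|$ -- so the main part absorbs into $g_u^p|G_l'(u)|^2\psi^p$ while the error becomes $G_l(u)^p|G_l'(u)|^{2-p}g_\psi^p \leq C\,u^{p-\beta-1}g_\psi^p$ by (\ref{cond3}) -- gives a clean bound on the right side. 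On the left, (\ref{cond1}) shows $u^{-\delta-1}G_l(u)^2\geq u^{-\delta}F_l(u)$, and (\ref{cond2}) rewrites $g_u^p|G_l'|^2 = \tfrac{(\beta-1)^2}{4\beta}g_u^p|F_l'|$. The net output is
\begin{equation*}
\delta\int_X u^{-\delta}F_l(u)\psi^p\,d\mu \leq \frac{(p-1)(\beta-1)^2}{4\beta}(1+\epsilon)\int_X g_u^p |F_l'(u)|\psi^p\,d\mu + C\int_X u^{p-\beta-1}g_\psi^p\,d\mu.
\end{equation*}

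\emph{Step 2: Euler--Lagrange-type comparison.} From (\ref{weak}) I would extract a complementary bound between $\int u^{-\delta}F_l(u)\psi^p$ and $\int g_u^p|F_l'(u)|\psi^p$. Testing with $\phi=-tF_l(u)\psi^p$ for $t>0$ and using the subadditivity $g_{u-tv}\leq g_u+tg_v$ together with $(a+b)^p-a^p\leq p(a+b)^{p-1}b$ for $a,b\geq 0$ and $p\geq 2$, then dividing by $t$ and sending $t\to 0^+$, yields $\int u^{-\delta}F_l(u)\psi^p \leq p\int g_u^{p-1}g_{F_l(u)\psi^p}\,d\mu$. The product rule $g_{F_l(u)\psi^p}\leq|F_l'(u)|g_u\psi^p + pF_l(u)\psi^{p-1}g_\psi$ combined with a Young estimate controlled by (\ref{cond3}) converts this into the desired comparison, up to a $C\int u^{p-\beta-1}g_\psi^p$ error.

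\emph{Step 3: Absorption and passage to the limit.} Plugging the comparison from Step 2 into the stability estimate from Step 1 eliminates the $g_u^p|F_l'|\psi^p$ term on the right. The inequality $\tfrac{(p-1)(\beta-1)^2}{4\beta}<\delta$ -- equivalent to $\beta<\delta_p$, the larger root of the associated quadratic -- precisely permits the absorption; together with $\beta>p-1$, this is the hypothesis $\beta\in J$. After absorption,
\begin{equation*}
\int_X u^{-\delta}F_l(u)\psi^p\,d\mu + \int_X g_u^p|F_l'(u)|\psi^p\,d\mu \leq C\int_X u^{p-\beta-1}g_\psi^p\,d\mu.
\end{equation*}
Finally, letting $l\to\infty$ and using $F_l(u)\nearrow u^{-\beta}$ and $|F_l'(u)|\to\beta u^{-\beta-1}$ pointwise on $\{u>0\}$, Fatou's lemma delivers (\ref{energyineq}).

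The main obstacle is Step 2 in the metric setting: unlike on Euclidean space or Riemannian manifolds -- where differentiating the $p$-energy at a minimizer produces an Euler--Lagrange \emph{equation} relating the two integrals -- here (\ref{weak}) is only a one-sided variational inequality and the subadditivity of upper gradients gives one direction of the EL comparison but not its reverse without further care. Producing the correct constant so that the absorption in Step 3 succeeds on the sharp range $\beta\in J$ (and not, say, a smaller subrange with an extra factor of $p$) is the technical heart of the argument, and is where the precise choice of $G_l$ satisfying (\ref{cond1})--(\ref{cond3}) is essential. Verifying that $G_l(u)\psi^{p/2}$ and $F_l(u)\psi^p$ are admissible test functions with the required local integrability on the left sides of (\ref{weak}) and (\ref{stable}) is routine but must be carried out in passing.
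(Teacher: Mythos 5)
Your Step 1 (the stability test with $\phi=G_l(u)\psi^{p/2}$, the Young estimates, and the use of \eqref{cond1}--\eqref{cond3}) reproduces the paper's treatment of \eqref{stable} faithfully. The gap is in Step 2: the comparison you extract from \eqref{weak} points in the wrong direction, and Step 3 then does not close. Testing \eqref{weak} with $\phi=-tF_l(u)\psi^p$ and using subadditivity $g_{u-tv}\le g_u+tg_v$ yields, as you write,
$$
\int_X u^{-\delta}F_l(u)\psi^p\,d\mu\le p\int_X g_u^{p-1}g_{F_l(u)\psi^p}\,d\mu\le p\int_X g_u^{p}|F_l'(u)|\psi^p\,d\mu+\cdots,
$$
i.e.\ an \emph{upper} bound for $\int u^{-\delta}F_l(u)\psi^p$ in terms of $\int g_u^{p}|F_l'(u)|\psi^p$. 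But in your Step 1 estimate the quantity $A=\int u^{-\delta}F_l(u)\psi^p\,d\mu$ sits on the left and $B=\int g_u^{p}|F_l'(u)|\psi^p\,d\mu$ on the right: you have $\delta A\le \kappa(1+\epsilon)B+E$ with $\kappa=\frac{(p-1)(\beta-1)^2}{4\beta}$ and $E$ the cutoff error. Adjoining $A\le pB+E'$ gives no control of $A$ or $B$ by the errors alone. What the absorption needs is the reverse inequality
$$
\int_X g_u^{p}|F_l'(u)|\psi^p\,d\mu\le \int_X u^{-\delta}F_l(u)\psi^p\,d\mu+C\int_X u^{p-\beta-1}g_\psi^p\,d\mu,
$$
and moreover with constant exactly $1$ in front of the first term on the right, because the absorption margin $\delta-\kappa>0$ is precisely what $\beta<\delta_p$ buys; any extra multiplicative factor would shrink the admissible range of $\beta$ and hence the dimension bound in Theorem \ref{mainthm}.

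The missing idea is the paper's choice of the \emph{positive} perturbation $\phi_1=\lambda F_l(u)\psi^p$ with $\lambda=(\beta l^{\beta+1})^{-1}$, so that $\lambda|F_l'|\le1$ by \eqref{tstfn}. Since $F_l$ is decreasing, the chain rule gives
$$
g_{u+\phi_1}\le\bigl(1-\lambda|F_l'(u)|\psi^p\bigr)g_u+\lambda pF_l(u)\psi^{p-1}g_\psi,
$$
with a genuine contraction factor $1-\lambda|F_l'(u)|\psi^p\in[0,1]$; a convexity inequality for the $p$-th power then yields \eqref{phi1}, and inserting this into \eqref{weak} produces exactly the one-sided Caccioppoli estimate \eqref{weakenergy}, namely $B\le p^p\int F_l(u)^p|F_l'(u)|^{1-p}g_\psi^p\,d\mu+A$. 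Subadditivity applied to a negative perturbation cannot detect this energy decrease. Your remaining steps (the passage $l\to\infty$ via monotone convergence, and the final combination) would go through once Step 2 is replaced by this argument.
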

\begin{proof}
Let $u\in N^{1,p}_{\mathrm{loc}}(X)$ be a positive variational minimizer corresponding to the problem \eqref{plap} and $\psi\in\text{Lip}_{c}(X)$ with $0\leq\psi\leq 1$. Let us denote by $\lambda=\frac{1}{\beta l^{\beta+1}}$, then we observe that 
\begin{equation}\label{tstfn}
\lambda|F_l'(t)|\leq 1.
\end{equation}
We apply 
$$
\phi_1=\lambda F_l(u)\psi^p
\quad\text{and}\quad 
\phi_2=G_l(u)\psi^\frac{p}{2}
$$ 
as test functions in Definition \ref{Variationalmin}.
As in \cite{KinMar} with \eqref{tstfn}, see also \cite[Theorem 2.14 and Proposition 8.8] {AJBjorn}, we have
\begin{equation}\label{phi1}
g_{u+\phi_{1}}^p\leq(1-\lambda|F_l'(u)|\psi^p)g_u^{p}+\lambda p^pF_l(u)^p|F_l'(u)|^{1-p}g_{\psi}^p,
\end{equation}
and
\begin{equation}\label{phi2}
g_{\phi_2}\leq\frac{p}{2}\psi^{\frac{p}{2}-1}G_l(u)g_{\psi}+\psi^\frac{p}{2}G_l'(u)g_u.
\end{equation}
We divide the proof into the following steps.

Step 1. Applying $\phi=\phi_1$ as a test function in \eqref{weak} and using \eqref{phi1} we get
\begin{equation}\label{weakenergy}
\int_{X}|F_l'(u)|g_u^{p}\psi^p\,d\mu\leq p^p\int_{X}{F_l(u)^p}{|F_l'(u)|^{1-p}}g_{\psi}^p\,d\mu-\int_{X}f(u)F_l(u)\psi^p\,d\mu,
\end{equation}
where $f(u)=-u^{-\delta}$.
Using \eqref{cond3} into \eqref{weakenergy} we obtain
\begin{equation}\label{weakenergy1}
\int_{X}|F_l'(u)|g_u^{p}\psi^p\,d\mu\leq C(p,\beta)\int_{X}u^{p-\beta-1}g_{\psi}^p\,d\mu-\int_{X}f(u)F_l(u)\psi^p\,d\mu.
\end{equation}

Step 2. Choosing $\phi=\phi_2$ as a test function in \eqref{stable} and applying \eqref{phi2} for $f(u)=-u^{-\delta}$, we obtain
\begin{equation}\label{sta}
\int_{X}f'(u)G_l(u)^2\psi^p\,d\mu\leq I_1+I_2+I_3,
\end{equation}
where
$$
I_1=(p-1)\frac{p^2}{4}\int_{X}g_u^{p-2}\psi^{p-2}G_l(u)^2g_{\psi}^2\,d\mu,
$$
$$
I_2=p(p-1)\int_{X}\psi^{p-1}G_l(u)|G_l'(u)|g_u^{p-1}g_\psi\,d\mu
$$
and
$$
I_3=(p-1)\int_{X}g_u^{p}|G_l'(u)|^2\psi^p\,d\mu.
$$
We estimate the integrals $I_1$ and $I_2$ separately. 
When $p=2$, we have
\begin{equation}\label{estX2}
I_1=\int_{X}G_l(u)^2g_{\psi}^2\,d\mu,
\end{equation}
and when $p>2$, for every $\epsilon\in(0,1)$, we have
\begin{equation}\label{estXp}
\begin{split}
I_1&=(p-1)\frac{p^2}{4}\int_{X}\big(g_u^{p-2}|G_l'(u)|^{\frac{2(p-2)}{p}}\psi^{p-2}\big)\big(G_l(u)^2|G_l'(u)|^{\frac{2(2-p)}{p}}g_{\psi}^2\big)\,d\mu\\
&\leq\frac{\epsilon}{2}\int_{X}g_u^{p}|G_l'(u)|^2\psi^p\,d\mu+C(\epsilon,p)\int_{X}|G_l(u)|^p|G_l'(u)|^{2-p}g_{\psi}^p\,d\mu\\
&=\frac{\epsilon}{2(p-1)}I_3+C(\epsilon,p)\int_{X}|G_l(u)|^p|G_l'(u)|^{2-p}g_{\psi}^p\,d\mu,
\end{split}
\end{equation}
where we applied Young's inequality with exponents $\frac{p}{2}$ and $\frac{p-2}{2}$ for some positive constant $C(\epsilon,p)$. 
For every $p\geq 2$ and $\epsilon\in(0,1)$, using Young's inequality with exponents $p$ and $\frac{p}{p-1}$, we get  
\begin{equation}\label{estY}
\begin{split}
I_2&=p(p-1)\int_{X}\big(g^{p-1}_{u}\psi^{p-1}|G_l'(u)|^\frac{2(p-1)}{p}\big)\big(G_l(u)|G_l'(u)|^{\frac{2-p}{p}}g_{\psi}\big)\,d\mu\\
&\leq\frac{\epsilon}{2}\int_{X}g_u^{p}|G_l'(u)|^2\psi^p\,d\mu+C(\epsilon,p)\int_{X}G_l(u)^p|G_l'(u)|^{2-p}g_{\psi}^p\,d\mu\\
&=\frac{\epsilon}{2(p-1)}I_3+C(\epsilon,p)\int_{X}G_l(u)^p|G_l'(u)|^{2-p}g_{\psi}^p\,d\mu.
\end{split}
\end{equation}
Using \eqref{estX2}, \eqref{estXp} and \eqref{estY} in \eqref{sta} we arrive at
\begin{equation}\label{stableenergy}
\begin{split}
\int_{X}f'(u)G_l(u)^2\psi^p\,d\mu
&\leq (p-1+\epsilon)\int_{X}g_u^{p}|G_l'(u)|^2\psi^p\,d\mu\\
&\qquad+C(\epsilon,p)\int_{X}G_l(u)^p|G_l'(u)|^{2-p}g_{\psi}^p\,d\mu.
\end{split}
\end{equation}

Step 3. By applying \eqref{cond2} and \eqref{weakenergy1} in \eqref{stableenergy} we obtain
\begin{equation}\label{step3energy}
\begin{split}
&\int_{X}f'(u)G_l(u)^2\psi^p\,d\mu\leq\frac{(p-1+\epsilon)(\beta-1)^2}{4\beta}\int_{X}g^{p}_{u}|F_l'(u)|\psi^p\,d\mu\\
&\qquad+C(\epsilon,p)\int_{X}G_l(u)^p|G_l'(u)|^{2-p}g_{\psi}^p\,d\mu\\
&\leq\frac{(p-1+\epsilon)(\beta-1)^2}{4\beta}\Big(C(p,\beta)\int_{X}u^{p-\beta-1}g_{\psi}^p\,d\mu-\int_{X}f(u)F_l(u)\psi^p\,d\mu\Big)\\
&\qquad+C(\epsilon,p)\int_{X}G_l(u)^p|G_l'(u)|^{2-p}g_{\psi}^p\,d\mu\\
&\leq C(p,\beta,\epsilon)\int_{X}u^{p-\beta-1}g_{\psi}^p\,d\mu-\frac{(p-1+\epsilon)(\beta-1)^2}{4\beta}\int_{X}f(u)F_l(u)\psi^p\,d\mu,
\end{split}
\end{equation}
where the last inequality is obtained by \eqref{cond3}. By \eqref{cond1} and using the fact that $f(u)=-u^{-\delta}$ in \eqref{step3energy} we obtain
\begin{equation}\label{stabe4energy}
\beta_{\epsilon}\int_{X}u^{-\delta}F_l(u)\psi^p\,d\mu\leq C(p,\beta,\epsilon)\int_{X}u^{p-\beta-1}g_{\psi}^p\,d\mu,
\end{equation}
where
$$
\beta_{\epsilon}=\delta-\frac{(p-1+\epsilon)(\beta-1)^2}{4\beta}.
$$
Since $\beta\in J$, we have
$$
\lim_{\epsilon\to 0^{+}}\beta_{\epsilon}=\delta-\frac{(p-1)(\beta-1)^2}{4\beta}>0.
$$
Hence we can fix $\epsilon\in(0,1)$ which depends on $\delta,p,\beta$ such that $\beta_{\epsilon}>0$.
By the monotone convergence theorem, by letting $l\to\infty$ in \eqref{stabe4energy}, we have
\begin{equation}\label{sta5}
\int_{X}u^{-\delta-\beta}\psi^p\,d\mu\leq C(\delta,p,\beta)\int_{X}u^{p-\beta-1}g_{\psi}^p\,d\mu.
\end{equation}
Letting $l\to\infty$ in \eqref{weakenergy1} and using \eqref{sta5} we obtain
\begin{equation}\label{sta6}
\begin{split}
\int_{X}u^{-\beta-1}g_u^{p}\psi^p\,d\mu&\leq C(\delta,p,\beta)\int_{X}u^{p-\beta-1}g_{\psi}^p\,d\mu+\int_{X}u^{-\delta-\beta}\psi^p\,d\mu\\
&\leq C(\delta,\beta,p)\int_{X}u^{p-\beta-1}g_{\psi}^p\,d\mu.
\end{split}
\end{equation}
From \eqref{sta5} and \eqref{sta6} we get
\begin{equation*}\label{sta7}
\int_{X}(u^{-\delta-\beta}+g_u^{p}u^{-\beta-1})\psi^p\,d\mu\leq C(\delta,\beta,p)\int_{X}u^{p-\beta-1}g_{\psi}^p\,d\mu.
\end{equation*}
This proves the lemma.
\end{proof}

\section{Proof of Theorem \ref{mainthm}.}

Let $u\in N^{1,p}_{\mathrm{loc}}(X)$ be a positive variational minimizer corresponding to the problem \eqref{plap}. For $R>0$, let $\psi_R\in\text{Lip}_c(X)$ be such that 
\begin{align*}
&0\leq\psi_R\leq 1 \text{ in } X, \\
&\psi_R\equiv 1 \text{ in } B(0,R) \text{ and } \psi_R\equiv 0 \text { in } X\setminus B(0,2R),\\
&g_{\psi_{R}}\leq\frac{C}{R} \text{ for some positive constant } C \text{ independent of } R.
\end{align*}
We apply Lemma \ref{energy} with $\psi=\psi_R^\frac{\delta+\beta}{\delta+p-1}$ in \eqref{energyineq}. 
Since
$$
g_{\psi}\leq\Big(\frac{\delta+\beta}{\delta+p-1}\Big)\psi_R^\frac{\beta-p+1}{\delta+p-1}g_{\psi_R},
$$
for every $\beta\in(p-1,\delta_p)$, using Young's inequality with exponents $\frac{\delta+\beta}{\beta-p+1}$ and $\frac{\delta+\beta}{\delta+p-1}$, we obtain
\begin{equation*}\label{prefinalestimate}
\begin{split}
\int_{X}(u^{-\beta-\delta}+g_{u}^{p}u^{-\beta-1})\psi_{R}^\frac{p(\delta+\beta)}{\delta+p-1}\,d\mu&\leq
C\Big(\frac{\delta+\beta}{\delta+p-1}\Big)^{p}\int_{X}u^{p-\beta-1}\psi_{R}^{\frac{p(\beta-p+1)}{\delta+p-1}}g_{\psi_{R}}^p\,d\mu\\
&\leq\frac{1}{2}\int_{X}\Big(u^{p-\beta-1}\psi_{R}^{\frac{p(\beta-p+1)}{\delta+p-1}}\Big)^\frac{\delta+\beta}{\beta-p+1}\,d\mu+C(\delta,\beta,p)\int_{X}g_{\psi_{R}}^\frac{p(\delta+\beta)}{\delta+p-1}\,d\mu\\
&=\frac{1}{2}\int_{X}u^{-\beta-\delta}\psi_{R}^\frac{p(\delta+\beta)}{\delta+p-1}\,d\mu+C(\beta,\delta,p)\int_{X}g_{\psi_{R}}^\frac{p(\delta+\beta)}{\delta+p-1}\,d\mu,
\end{split}
\end{equation*}
which gives
\begin{equation}\label{prefinal}
\begin{split}
\int_{X}(\frac{1}{2}u^{-\beta-\delta}+g_{u}^{p}u^{-\beta-1})\psi_{R}^\frac{p(\delta+\beta)}{\delta+p-1}\,d\mu&\leq C(\beta,\delta,p)\int_{X}g_{\psi_{R}}^\frac{p(\delta+\beta)}{\delta+p-1}\,d\mu.
\end{split}
\end{equation}
Now using the properties of $\psi_R$, from \eqref{prefinal} we get
\begin{equation}\label{finalestimate}
\begin{split}
\int_{B(0,R)}(\frac{1}{2}u^{-\beta-\delta}+g_{u}^{p}u^{-\beta-1})\,d\mu&\leq C(\delta,\beta,p)\int_{B(0,2R)}R^{-\frac{p(\delta+\beta)}{\delta+p-1}}\,d\mu\\
&\leq C(\delta,\beta,p)C(C_\mu)R^{m-\frac{p(\delta+\beta)}{\delta+p-1}},
\end{split}
\end{equation}
where the last inequality is obtained by Lemma \ref{doublingimply} with $m=\log_2\,C_\mu$. 
By the given condition,
$$
\lim_{\beta\to\delta_p}\Big(m-\frac{p(\delta+\beta)}{\delta+p-1}\Big)<0.
$$
Thus there exists $\beta=\beta_{0}\in J$ such that 
$$
m-\frac{p(\delta+\beta_{0})}{\delta+p-1}<0
$$
By letting $R\to\infty$ in \eqref{finalestimate} we obtain
$$
\int_{X}(\frac{1}{2}u^{-\beta_{0}-\delta}+g_{u}^{p}u^{-\beta_{0}-1})\,d\mu=0,
$$ 
which is a contradiction. This completes the proof.

\bibliographystyle{plain}
\bibliography{mybibfile.bib}

\noindent
P.G.,
Department of Mathematics and System Analysis,
P.O. Box 11100, FI-00076,
Aalto University, Finland\\
Email: pgarain92@gmail.com

\medskip
\noindent
J.K.,
Department of Mathematics and System Analysis,
P.O. Box 11100, FI-00076
Aalto University, Finland\\
Email: juha.k.kinnunen@aalto.fi
\end{document}